\theoremstyle{definition}
\newtheorem{definition}{Definition}[section]
\newtheorem{example}[definition]{Example}
\newtheorem{question}[definition]{Question}
\theoremstyle{plain}
\newtheorem{lemma}[definition]{Lemma}
\newtheorem{theorem}[definition]{Theorem}
\numberwithin{equation}{section}
\newcommand\fullref[2]{%
  \ifdefined\hyperref%
    {\hyperref[#2]{#1\ \penalty 200\relax\ref*{#2}}}%
  \else%
    {#1\ \penalty 200\relax\ref{#2}}%
  \fi%
}
\newcommand{\defterm}[1]{\textit{#1}}
\newcommand{\nset}{\mathbb{N}}
\newcommand{\zset}{\mathbb{Z}}
\DeclareMathOperator{\im}{im}
\DeclareMathOperator{\lcm}{lcm}
\newcommand{\rel}[1]{\mathcal{#1}}
\DeclareMathOperator{\gH}{\mathcal{H}}
\DeclareMathOperator{\gL}{\mathcal{L}}
\DeclareMathOperator{\gR}{\mathcal{R}}
\newcommand{\pres}[2]{\left\langle #1\:|\:#2 \right\rangle}
\newcommand{\sgpres}[2]{\mathrm{Sg}\langle #1\mid #2 \rangle}
\newcommand{\gpres}[2]{\mathrm{Gp}\langle #1\mid #2 \rangle}
\newcommand{\imreduces}{\rightarrow}
\newcommand{\reduces}{\rightarrow^*}
\begin{document}








\title{Hopfian and co-hopfian subsemigroups and extensions}
\author{Alan J. Cain \& Victor Maltcev}
\date{}

\thanks{The first author's research was funded by the European
  Regional Development Fund through the programme {\sc COMPETE} and by
  the Portuguese Government through the {\sc FCT} (Funda\c{c}\~{a}o
  para a Ci\^{e}ncia e a Tecnologia) under the project {\sc
    PEst-C}/{\sc MAT}/{\sc UI0}144/2011 and through an {\sc FCT}
  Ci\^{e}ncia 2008 fellowship. Some of the work described in this
  paper was carried out while the first author was visiting Sultan
  Qaboos University and the authors thank the University for its
  support.}

\maketitle

\address[AJC]{%
Centro de Matem\'{a}tica, Faculdade de Ci\^{e}ncias, \\
Universidade do Porto, \\
Rua do Campo Alegre 687, 4169--007 Porto, Portugal
}
\email{%
ajcain@fc.up.pt
}
\webpage{%
www.fc.up.pt/pessoas/ajcain/
}

\address[VM]{%
Mathematics Department,\\
Technion -- Israel Institute of Technology, \\
Haifa 32000, Israel
}
\email{%
victor.maltcev@gmail.com
}

\begin{abstract}
%
%
This paper investigates the preservation of hopfic\-ity and
co-hopfic\-ity on passing to finite-index subsemigroups and
extensions. It was already known that hopfic\-ity is not preserved on
passing to finite Rees index subsemigroups, even in the finitely
generated case. We give a stronger example to show that it is not
preserved even in the finitely presented case. It was also known that
hopfic\-ity is not preserved in general on passing to finite Rees
index extensions, but that it is preserved in the finitely generated
case. We show that, in contrast, hopfic\-ity is not preserved on
passing to finite Green index extensions, even within the class of
finitely presented semigroups. Turning to co-hopfic\-ity, we prove
that within the class of finitely generated semigroups, co-hopfic\-ity
is preserved on passing to finite Rees index extensions, but is not
preserved on passing to finite Rees index subsemigroups, even in the
finitely presented case. Finally, by linking co-hopfic\-ity for graphs
to co-hopfic\-ity for semigroups, we show that without the hypothesis
of finite generation, co-hopfic\-ity is not preserved on passing to
finite Rees index extensions.
\keywords{hopfian, co-hopfian, subsemigroup, extension, finite index, Rees index, Green index}
\msc{20M15, 20M10, 05C60}
\end{abstract}

\section{Introduction}

An algebraic or relational structure is \defterm{hopfian} if it is not
isomorphic to any proper quotient of itself, or, equivalently, if any
surjective endomorphism of the structure is an automorphism. An
algebraic or relational structure is \defterm{co-hopfian} if it is not
isomorphic to any proper substructure of itself, or, equivalently, if
any injective endomorphism of the structure is an automorphism.

Hopfic\-ity was first introduced by Hopf, who asked if all finitely
generated groups were hopfian \cite{hopf_beitrage}. The celebrated
Baumslag--Solitar groups $\pres{x,y}{x^my = yx^n}$ provide the easiest
counterexample: $\pres{x,y}{x^2y = yx^3}$ is finitely generated, and
indeed finitely presented, and non-hopfian; see
\cite[Theorem~1]{baumslag_2generator}. Furthermore,
$\pres{x,y}{x^{12}y = yx^{18}}$ is hopfian but contains a non-hopfian
subgroup of finite index \cite[Theorem~2]{baumslag_2generator}. Hence
hopfic\-ity is not preserved under passing to finite-index
subgroups. On the other hand, a finite extension of a finitely
generated hopfian group is also hopfian
\cite[Corollary~2]{hirshon_hopficity}. There seems to have been no
study of whether co-hopfic\-ity for groups is preserved on passing to
finite-index subgroups or extensions, and it seems that the questions
of the preservation of co-hopficity in each direction are both
open. \fullref{Table}{tbl:summarygroups} summarizes the state of
knowledge about the preservation of hopficity and co-hopficity on
passing to finite-index subgroups and extensions.

\begin{table}[t]
\centering
\begin{tabular}{lc@{~}lc@{~}l}
  \toprule 
  & \multicolumn{4}{c}{Preserved on passing to finite index} \\
  \cmidrule{2-5}
  Property & \multicolumn{2}{c}{Subgroups} & \multicolumn{2}{c}{Extensions} \\
  \midrule
  Hopficity & N & [by f.p.~case] & ? & \fullref{Qu.}{qu:hopfgroupup}\\
  Hopficity \& f.g. & N & [by f.p.~case] & Y & \cite[Co.~2]{hirshon_hopficity} \\
  Hopficity \& f.p. & N & \cite[Th.~2]{baumslag_2generator} & Y & [by f.g.~result] \\
  Co-hopficity & ? & \fullref{Qu.}{qu:cohopfgroup} & ? & \fullref{Qu.}{qu:cohopfgroup}\\
  Co-hopficity \& f.g. & ? & \fullref{Qu.}{qu:cohopfgroup} & ? & \fullref{Qu.}{qu:cohopfgroup} \\
  Co-hopficity \& f.p. & ? & \fullref{Qu.}{qu:cohopfgroup} & ? & \fullref{Qu.}{qu:cohopfgroup} \\
  \bottomrule
\end{tabular}
\caption{Summary, for groups, of the preservation of hopficity and
  co-hopficity on passing to finite index subgroups and
  extensions. [\textit{Key:} f.g.~= finite generation; f.p.~= finite
  presentation; Y~= property is preserved; N~= property is not
  preserved in general; ?~= open question.]}
\label{tbl:summarygroups}
\end{table}

There are two useful notions of index for semigroups. For a semigroup
$S$ with a subsemigroup $T$, the \defterm{Rees index} of $T$ in $S$ is
$|S-T|+1$, and the \defterm{Green index} of $T$ in $S$ is the number
of $T$-relative $\gH$-classes in $S-T$. Rees index is more
established, and many finiteness properties, such as finite generation
and finite presentability, are known to be preserved on passing to or
from subsemigroups of finite Rees index; see the brief summary in
\cite[\S~11]{ruskuc_largesubsemigroups} or the comprehensive survey
\cite{cm_finreessurvey}. Green index is newer, but has the advantage
that finite Green index is a common generalization of finite Rees
index and finite group index, and some progress has been made in
proving the preservation of finiteness properties on passing to or
from subsemigroups of finite Green index; see
\cite{cgr_greenindex,gray_green1}.

The second author and Ru\v{s}kuc proved that a finite Rees index
extension of a finitely generated hopfian semigroup is itself hopfian
\cite[Theorem~3.1]{maltcev_hopfian}, and gave an example to show that
this no longer holds without the hypothesis of finite generation
\cite[\S~2]{maltcev_hopfian}. They also gave an example showing that
hopfic\-ity is not preserved on passing to finite Rees index
subsemigroups, even in the finitely generated case
\cite[\S~5]{maltcev_hopfian}. In this paper, we give an example
showing that it is not preserved even in the finitely presented case
(\fullref{Example}{eg:hopffreesfpdown}). We also give an example
showing that, again even in the finitely presented case, a finite
Green index extension of a hopfian semigroup need not be hopfian
(\fullref{Example}{eg:hopfgreenup}), showing that the result of the
second author and Ru\v{s}kuc does not generalize to finite Green
index.

We then turn to co-hopfic\-ity. We prove that a finite Rees index
extension of a finitely generated co-hopfian semigroup is itself
co-hopfian (\fullref{Theorem}{thm:cohopfreesfgup}), and construct an
example showing that this does not hold without the hypothesis of
finite generation (\fullref{Example}{eg:cohopfreesup}). We also give
an example of a non-co-hopfian finite Rees index subsemigroup of a
finitely presented co-hopfian semigroup
(\fullref{Example}{eg:cohopfreesdown}).

\fullref{Table}{tbl:summarysemigroups} summarizes the state of
knowledge for semigroups about the preservation of hopficity and
co-hopficity on passing to finite Rees and Green index subsemigroups
and extensions.

\begin{landscape}
\begin{table}
\centering
\begin{tabular}{lc@{~}lc@{~}lc@{~}lc@{~}l}
  \toprule 
                       & \multicolumn{8}{c}{Preserved on passing to}                                                                                                                                                                        \\
  \cmidrule{2-9}
                       & \multicolumn{4}{c}{Finite Rees index} & \multicolumn{4}{c}{Finite Green index}                                                                                                                                     \\
  \cmidrule(r){2-5} \cmidrule{6-9}
  Property             & \multicolumn{2}{c}{Subsemigroups}     & \multicolumn{2}{c}{Extensions}    & \multicolumn{2}{c}{Subsemigroups} & \multicolumn{2}{c}{Extensions}                                                                     \\
  \midrule
  Hopficity            & N                                     & [by f.g.~case]                    & N                                 & \cite[\S~2]{maltcev_hopfian}    & N & [by Rees index case] & N & [by Rees index case]              \\
  Hopficity \& f.g.    & N                                     & \cite[\S~5]{maltcev_hopfian}      & Y                                 & \cite[Th.~3.1]{maltcev_hopfian} & N & [by Rees index case] & N & [by f.p.~case]                    \\
  Hopficity \& f.p.    & N                                     & \fullref{Ex.}{eg:hopffreesfpdown} & Y                                 & [by f.g.~result]                & N & [by Rees index case] & N & \fullref{Ex.}{eg:hopfgreenup}
                                                                                                                                                                                                                                            \\
  Co-hopficity         & N                                     & [by f.p.~case]                    & N                                 & \fullref{Ex.}{eg:cohopfreesup}
                       & N                                     & [by Rees index case]              & N                                 & [by Rees index case]                                                                               \\
  Co-hopficity \& f.g. & N                                     & [by f.p.~case]                    & Y                                 & \fullref{Th.}{thm:cohopfreesfgup}
                       & N                                     & [by Rees index case]              & ?                                 & \fullref{Qu.}{qu:cohopfgreenfgup}                                                                  \\
  Co-hopficity \& f.p. & N                                     & \fullref{Ex.}{eg:cohopfreesdown}  & Y                                 & [by f.g.~result]                & N & [by Rees index case] & ? & \fullref{Qu.}{qu:cohopfgreenfgup} \\
  \bottomrule
\end{tabular}
\caption{Summary, for semigroups, of the preservation of hopficity and
  co-hopficity on passing to finite index subsemigroups and
  extensions. [\textit{Key:} f.g.~= finite generation; f.p.~= finite
  presentation; Y~= property is preserved; N~= property is not
  preserved in general; ?~= open question.]}
\label{tbl:summarysemigroups}
\end{table}
\end{landscape}

\section{Preliminaries}

\subsection{Presentations and rewriting systems}

The group presentation with (group) generators $A$ and defining
relations $\rel{R}$ (which may involve inverses of elements of $A$) is
denoted $\gpres{A}{\rel{R}}$. The semigroup presentation with
(semigroup) generators from $A$ and defining relations $\rel{R}$ is
denoted $\sgpres{A}{\rel{R}}$. For a semigroup $S$ presented by
$\sgpres{A}{\rel{R}}$ and words $u,v \in A^+$, write $u=v$ to indicate
that $u$ and $v$ are equal as words, and write $u =_S v$ to indicate
they represent the same element of $S$.

A \defterm{string rewriting system}, or simply a \defterm{rewriting
  system}, is a pair $(A,\rel{R})$, where $A$ is a finite alphabet and
$\rel{R}$ is a set of pairs $(\ell,r)$, often written $\ell \imreduces
r$, known as \defterm{rewriting rules}, drawn from $A^* \times
A^*$. The single reduction relation $\imreduces$ is defined
as follows: $u \imreduces_{\rel{R}} v$ (where $u,v \in A^*$) if there
exists a rewriting rule $(\ell,r) \in \rel{R}$ and words $x,y \in A^*$
such that $u = x\ell y$ and $v = xry$. The reduction relation
$\reduces$ is the reflexive and transitive closure of
$\imreduces$. A word $w \in A^*$ is \defterm{reducible} if
it contains a subword $\ell$ that forms the left-hand side of a
rewriting rule in $\rel{R}$; it is otherwise called
\defterm{irreducible}.

The string rewriting system $(A,\rel{R})$ is \defterm{Noetherian} if
there is no infinite sequence $u_1,u_2,\ldots \in A^*$ such that $u_i
\imreduces_{\rel{R}} u_{i+1}$ for all $i \in \nset$. The rewriting
system $(A,\rel{R})$ is \defterm{confluent} if, for any words $u,
u',u'' \in A^*$ with $u \reduces u'$ and $u
\reduces u''$, there exists a word $v \in A^*$ such that $u'
\reduces v$ and $u'' \reduces v$. A rewriting
system is complete if it is both confluent and Noetherian.

Let $(A,\rel{R})$ be a complete rewriting system. Then for any word $u
\in A^*$, there is a unique irreducible word $v \in A^*$ with $u
\reduces_{\rel{R}} v$ \cite[Theorem~1.1.12]{book_srs}. The irreducible
words are said to be in \defterm{normal form}. The semigroup presented
by $\sgpres{A}{\rel{R}}$ may be identified with the set of normal form
words under the operation of `concatenation plus reduction to normal
form'.

\subsection{Indices}

Let $S$ be a semigroup and let $T$ be a subsemigroup of $S$. The
\defterm{Rees index} of $T$ in $S$ is defined to be $|S-T|+1$. If $T$
is an ideal of $S$, then the Rees index of $T$ in $S$ is cardinality
of the Rees factor semigroup $S/T = (S - T) \cup \{0\}$.

To define the Green index of $T$ in $S$, we must first define the
$T$-relative Green's relations on $S$. As usual, $S^1$ denotes the
semigroup $S$ with an identity element adjoined. Extend this notation
to subsets of $S$: that is, $X^1 = X \cup \{ 1 \}$ for $X \subseteq
S$. Define the \defterm{$T$-relative Green's relations} $\gR^T$, $\gL^T$,
and $\gH^T$ on the semigroup $S$ by
\begin{align*}
x \gR^T y  &\Leftrightarrow  xT^1 = yT^1; & x \gL^T y &\Leftrightarrow T^1x = T^1y; &\gH^T &= \gR^T \cap \gL^T.
\end{align*}
Each of these relations is an equivalence relation on $S$. When $T =
S$, they coincide with the standard Green's relations on
$S$. Furthermore, these relations respect $T$, in the sense that each
$\gR^T$-, $\gL^T$-, and $\gH^T$-class lies either wholly in $T$ or
wholly in $S - T$. Following \cite{gray_green1}, define the
\defterm{Green index} of $T$ in $S$ to be one more than the number of
$\gH^T$-classes in $S - T$. If $S$ and $T$ are groups, then
$T$ has finite group index in $S$ if and only if it has finite Green
index in $S$ \cite[Proposition~8]{gray_green1}.

\section{Hopficity}

It is known that the hopfic\-ity is not preserved on passing to finite
Rees index subsemigroups, even for finitely generated semigroups
\cite[\S~5]{maltcev_hopfian}. The following example shows that within the
class of finitely \emph{presented} semigroups, and even within the
class of semigroups presented by finite complete rewriting systems,
hopfic\-ity is not preserved on passing to finite Rees
subsemigroups. This example has already appeared in the second
author's Ph.D. thesis \cite[Examples~5.6.1 \&~5.6.2]{maltcev_phd}.

\begin{example}
\label{eg:hopffreesfpdown}
Let
\begin{equation}
\label{eg:hopffreesfpdowntpres}
T = \sgpres{a,b}{abab^2ab = b}.
\end{equation}
Notice that
\[
abab^3 =_T abab^2(abab^2ab) = (abab^2ab)ab^2 ab =_T bab^2ab.
\]
It easy to check that the rewriting system $(\{a,b\},\{abab^2ab
\imreduces b, abab^3 \imreduces bab^2ab\})$ is confluent and
Noetherian. Clearly $T$ is also presented by
\[
\sgpres{a,b}{(abab^2ab,b),(abab^3,bab^2ab)}.
\]

Define an endomorphism
\[
\phi : T \to T; \qquad\qquad a \mapsto a,\qquad b\mapsto bab.
\]
This endomorphism is well defined since the words on the two sides of
the defining relation in the presentation
\eqref{eg:hopffreesfpdowntpres} for $T$ are mapped by $\phi$ to the
same element of $T$:
\begin{align*}
(abab^2ab)\phi =_T{}& a\;bab\;a\;(bab)^2\;a\;bab \\
={}& abab\;abab^2ab\;abab \\
\imreduces{}& abab^2ab\;ab \\
\imreduces{}& bab \\
={}& b\phi.
\end{align*}

Since $a\phi = a$ and
\begin{equation}
\label{eq:hopffreesfpdown1}
(ab^2)\phi =_T a(bab)^2 =_T abab^2ab \imreduces b,
\end{equation}
the endomorphism $\phi$ is surjective. Furthermore, applying
\eqref{eq:hopffreesfpdown1} shows that
\[
(ab^2a^2b^2)\phi =_T (ab^2\;a\;ab^2)\phi =_T bab =_T b\phi.
\]
But both $ab^2a^2b^2$ and $b$ are irreducible and so $ab^2a^2b^2
\neq_T b$. Hence $\phi$ is not bijective and so not an
automorphism. This proves that $T$ is not hopfian.

Let 
\[
S = \sgpres{a,b,f}{abab^2ab = b, fa=ba, af=ab, fb = bf = f^2 = b^2}.
\]
Notice that $S = T \cup \{f\}$ since all products of two or more
generators (regardless of whether they include generators $f$) must
lie in $T$. So $T$ is a finite Rees index subsemigroup of $S$. Notice
further that since $T$ is presented by a finite complete rewriting
system, so is $S$ \cite[Theorem~1]{wang_fcrssmallext}.

Let $\psi : S \to S$ be a surjective endomorphism. Since $a$ and $f$
are the only indecomposable elements of $S$, we have $\{a,f\}\psi =
\{a,f\}$. Let $\vartheta = \psi^2$; then $\vartheta$ is a surjective
endomorphism of $S$ with $a\vartheta = a$ and $f\vartheta = f$.

If $b\vartheta = f$, then $f =_S b\vartheta =_S (abab^2ab)\vartheta =_S
afaf^2af =_S abab^2ab =_S b$, which is a contradiction. Hence $b\vartheta = w \in T$. Then
\[
ab =_S af = (a\vartheta)(f\vartheta) =_S (af)\vartheta =_S (ab)\vartheta =
(a\vartheta)(b\vartheta) = aw.
\]
Now, $ab$ and $aw$ lie in the subsemigroup $T$ and so $ab =_T aw$.
But $T$ is left-cancellative by Adjan's theorem \cite{adjan_defining};
hence $b =_T w$ and so $b =_S w$. That is, $b\vartheta = w =_S
b$. Since $a\vartheta = a$ and $f\vartheta = f$, the endomorphism
$\vartheta$ must be the identity mapping on $S$ and so
bijective. Hence $\psi$ is bijective and so an automorphism. This
proves that $S$ is hopfian.

Therefore $S$ is a hopfian semigroup, finitely presented by a complete
rewriting system, with a non-hopfian subsemigroup $T$ of finite Rees
index, which is also finitely presented by a finite complete rewriting
system.
\end{example}

We now give an example to show that a finite Green index extension of
a finitely generated (and, indeed, finitely presented) hopfian
semigroup is not necessarily hopfian, in contrast to the situation for
finite Rees index \cite[Theorem~3.1]{maltcev_hopfian}.

\begin{example}
\label{eg:hopfgreenup}
Let $G$ and $H$ be the groups presented by
\begin{align*}
G &= \gpres{a,b,c}{a^{-1}ba = b^2, bc = cb}, \\
H &= \gpres{a',b',c'}{a'^{-1}b'a' = b'^2, b'c' = c'b', \\
&\qquad\qquad a'b'^{-1}a'^{-1}c'^{-1}a'b'a'^{-1}c'a'b'^{-1}a'^{-1}c'^{-1}a'b'a'^{-1}c' = 1}. 
\end{align*}
These groups were defined by Neumann
\cite[p.~543--4]{neumann_freeproducts}, except that he used redundant
generators $b'_1 = a'b'a'^{-1}$ and $d' = b_1'^{-1}c'^{-1}b_1'c'$ to
shorten the presentation of $H$; we have removed the redundant
generators to clarify the reasoning that follows. Let
\begin{align*}
\lambda &: G \to H; & a\lambda &= a', &b\lambda &= b', &c\lambda &= c'; \\
\mu &: H \to G; & a'\mu &= a, &b'\mu &= a^{-1}ba, &c'\mu &= c.
\end{align*}
The map $\lambda$ is obviously a well-defined surjective homomorphism;
Neumann \cite[p.~544]{neumann_freeproducts} showed that $\mu$ is also
a well-defined surjective homomorphism, and that neither $\lambda$ nor
$\mu$ is injective. That is, $G$ and $H$ are proper homomorphic images
of each other under the surjective homomorphisms $\lambda$ and
$\mu$. [Neumann defined $b'\mu = b^2$, but since $a^{-1}ba = b^2$ by
  the defining relations of $G$, our modified definition is
  equivalent.] Furthermore, $G$ and $H$ are non-isomorphic
\cite[Theorem on p.~544]{neumann_freeproducts}.

Let $\vartheta = \lambda\circ\mu$. Then $\vartheta : G \to G$ is a
surjective endomorphism of $G$ that is not an isomorphism. Notice that
\begin{align*}
a\vartheta &= a, &b\vartheta &= a^{-1}ba, &c\vartheta &= c.
\end{align*}

Let $F$ be the free group with basis $\{x,y,z\}$. Define
a homomorphism
\begin{align*}
\phi &: F \to G; & x\phi &= a, &y\phi &= b, &z\phi &= c.
\end{align*}
Partially order $\{F,G\}$ by $F > G$. Let $S$ be the Clifford
semigroup formed from the groups $F$ and $G$ with the order $\geq$ and
the homomorphism $\phi$. [See \cite[\S~4.2]{howie_fundamentals} for
the definition of Clifford semigroups.]

Clearly $F$ is a subsemigroup of $S$. Since the homomorphism $\phi$ is
surjective, any element of $G$ can be right-multiplied (in $S$) by an
element of $F$ to give any other element of $G$; thus all elements of
$G$ are related by $\gR^F$. Similarly all elements of $G$ are
$\gL^F$-related and so $\gH^F$-related. Therefore $G$ is the unique
$\gH^F$-class in $S-F$ and so $F$ has finite Green index in $S$.

Define an endomorphism
\begin{align*}
\psi &: S \to S; &x\psi &= x, &y\psi &= x^{-1}yx, &z\psi &= z,\\
&&a\psi &= a, &b\psi &= a^{-1}ba, &c\psi &= c.
\end{align*}
It is easy to see that $\psi$ is a homomorphism as a consequence of
$\psi|_G = \vartheta : G \to G$ being a homomorphism. Since $\psi|_G =
\vartheta$ is surjective, we have $G \subseteq \im\psi$. Since
$\{x,y,z\}\psi = \{x,x^{-1}yx,z\}$ generates $F$ (as a group), we see that $F \subseteq
\im\psi$. So $\psi$ is surjective. However, since $\psi|_G = \vartheta$
is not injective, $\psi$ is not injective. Hence $S$ is not hopfian.

Finally, note that the finitely generated free group $F$ is hopfian
\cite[Proposition~I.3.5]{lyndon_cgt}, and that $S$ is finitely
presented \cite[Theorem~5.1]{howie_constructions}. Therefore $F$ is a
finitely presented hopfian semigroup with a finitely presented
non-hopfian extension $S$ of finite Green index.
\end{example}

\section{Co-hopficity}

The following example exhibits a finitely generated co-hopfian
semigroup $S$ with a non-co-hopfian subsemigroup $T$ of finite Rees
index, showing that co-hopfic\-ity is not preserved on passing to finite
Rees index subsemigroups, even in the finitely generated (and, indeed,
finitely presented) case:

\begin{example}
\label{eg:cohopfreesdown}
Let $T$ be the free semigroup with basis $x$. Then any map $x \mapsto
x^k$ extends to an injective endomorphism from $T$ to itself; for $k
\geq 2$ this endomorphism is not bijective and so not an
automorphism. Thus $T$ is not co-hopfian.

Let
\[
S = \sgpres{x,y}{y^2 = xy = yx = x^2}
\]
Notice that $S = T \cup \{y\}$ since all products of two or more
generators must lie in $T$. So $T$ is a finite Rees index subsemigroup
of $S$. It is easy to check that the rewriting system $(\{x,y\},\{y^2
\imreduces x^2, xy \imreduces x^2, yx \imreduces x^2\})$ is confluent
and Noetherian. Identify $S$ with the set of irreducible words with
respect to this rewriting system. The Cayley graph of $S$ with respect
to $\{x,y\}$ is shown in
\fullref{Figure}{fig:cohopfreesdowncayleygraph}.

\begin{figure}[t]
\centerline{%
\begin{tikzpicture}[node distance=7mm]
\node (x1) {$x$};
\node (x2) [above right=of x1] {$x^2$};
\node (x3) [right=of x2] {$x^3$};
\node (x4) [right=of x3] {$x^4$};
\node (x5) [right=of x4] {};
\node (y1) [above left=of x2] {$y$};
\begin{scope}[every node/.style={font=\scriptsize}]
\draw[->] (x1) edge[bend left=10] node[anchor=south] {$x$} (x2);
\draw[->] (x1) edge[bend right=10] node[anchor=north] {$y$} (x2);
\draw[->] (y1) edge[bend left=10] node[anchor=south] {$x$} (x2);
\draw[->] (y1) edge[bend right=10] node[anchor=north] {$y$} (x2);
\draw[->] (x2) edge[bend left=10] node[anchor=south] {$x$} (x3);
\draw[->] (x2) edge[bend right=10] node[anchor=north] {$y$} (x3);
\draw[->] (x3) edge[bend left=10] node[anchor=south] {$x$} (x4);
\draw[->] (x3) edge[bend right=10] node[anchor=north] {$y$} (x4);
\draw[->] (x4) edge[bend left=10,dashed] node[anchor=south] {$x$} (x5);
\draw[->] (x4) edge[bend right=10,dashed] node[anchor=north] {$y$} (x5);
\end{scope}
\end{tikzpicture}}
\caption{The Cayley graph of the semigroup $S$ from
  \fullref{Example}{eg:cohopfreesdown} with respect to the generating
  set $\{x,y\}$.}
\label{fig:cohopfreesdowncayleygraph}
\end{figure}

Let $\phi : S \to S$ be an injective endomorphism. Suppose for
\textit{reductio ad absurdum} that $x\phi = x^k$ with $k \geq 2$. Then
$(y\phi)^2 = (y^2)\phi = (x^2)\phi = x^{2k}$, and so $y\phi = x^k =
x\phi$ since the unique square root of $x^{2k}$ in $S$ is $x^k$, which
contradicts the injectivity of $\phi$. Hence either $x\phi = x$ or
$x\phi = y$. In the former case, $x^\ell\phi = x^\ell$ for all $\ell
\in \nset$ and so $y\phi = y$ by the injectivity of $\phi$; hence
$\phi$ is surjective. In the latter case, $x^\ell\phi = y^\ell
\reduces x^\ell$ for all $\ell \geq 2$ and so $y\phi = x$ by the
injectivity of $\phi$; hence $\phi$ is surjective. In either case,
$\phi$ is a bijection and so an automorphism. Hence $S$ is co-hopfian.

Therefore $S$ is a co-hopfian semigroup presented by a finite complete
rewriting system, with a non-co-hopfian subsemigroup $T$ of finite
Rees index, which is also finitely presented by a finite complete
rewriting system (since it is free).
\end{example}

We have a positive result for passing to finite Rees index extensions
in the finitely generated case:

\begin{theorem}
\label{thm:cohopfreesfgup}
Let $S$ be a semigroup and $T$ a subsemigroup of $S$ of finite Rees
index. Suppose $T$ is finitely generated and co-hopfian. Then $S$ is co-hopfian.
\end{theorem}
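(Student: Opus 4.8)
The plan is to show that an arbitrary injective endomorphism $\phi : S \to S$ is necessarily surjective. Throughout, write $F = S - T$, which is finite since $T$ has finite Rees index.

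The engine of the argument is the following reduction: \emph{it suffices to find an integer $N \geq 1$ with $\phi^N(T) \subseteq T$}. Indeed, suppose such an $N$ exists. Then $\phi^N|_T$ is an endomorphism of $T$, and it is injective because $\phi$ is; since $T$ is co-hopfian, $\phi^N|_T$ is an automorphism of $T$, so $\phi^N(T) = T$. Now for $f \in F$ we cannot have $\phi^N(f) \in T$: otherwise $\phi^N(f) = \phi^N(t)$ for some $t \in T$ (as $\phi^N$ maps $T$ onto $T$), and injectivity of $\phi^N$ would give $f = t \in T$, a contradiction. Hence $\phi^N(F) \subseteq F$, and since $\phi^N$ is injective and $F$ is finite, $\phi^N(F) = F$. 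Therefore $\phi^N(S) = \phi^N(T) \cup \phi^N(F) = T \cup F = S$, and since $\phi^N(S) \subseteq \phi(S) \subseteq S$ this forces $\phi(S) = S$, as required. Note that this reduction uses only the co-hopficity of $T$ and the finiteness of $F$; the hypothesis of finite generation has yet to be spent, which is consistent with its being indispensable (cf.\ \fullref{Example}{eg:cohopfreesup}).

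To produce such an $N$ I would analyse the dynamics of $\phi$. Because $\phi$ is injective and $F$ is finite, the $\phi$-orbit of each element of $S$ is either a finite cycle or an infinite sequence of distinct elements; in the latter case only finitely many terms can lie in $F$, so the orbit is eventually contained in $T$. Consequently, for each $m$ the subsemigroup $U_m = \{ t \in T : \phi^k(t) \in T \text{ for } 1 \leq k \leq m \}$ has finite Rees index in $T$ (its complement is covered by the at most $m|F|$ preimages of $F$ under $\phi, \ldots, \phi^m$), and hence is finitely generated, finite generation being preserved on passing to finite-Rees-index subsemigroups. Passing to a suitable power of $\phi$ — legitimate, since surjectivity of a power of $\phi$ implies surjectivity of $\phi$ — lets me assume that $\phi$ acts as the identity on the finitely many elements of $F$ that lie on cycles contained in $F$, so that the genuinely problematic behaviour is confined to a finite, well-understood part of $S$.

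The main obstacle is to upgrade ``every orbit eventually enters $T$'' to a \emph{single uniform} $N$ with $\phi^N(T) \subseteq T$. The difficulty is concentrated in the elements of $F$ lying in the eventual image $\bigcap_{k \geq 0} \phi^k(S)$: such an $f$ has preimages of arbitrary depth, and these preimages lie cofinitely in $T$, so they repeatedly witness $\phi^N(t) = f \notin T$ for arbitrarily large $N$ — and this can occur even when $\phi$ is in fact surjective. Ruling this out is exactly where co-hopficity must re-enter together with finite generation: finite generation keeps all the exceptional sets finite and the auxiliary subsemigroups $U_m$ finitely generated, and the goal is to extract from $\phi$ a $\phi$-invariant finite-Rees-index subsemigroup on which $\phi$ restricts honestly and to which the co-hopficity of $T$ can be brought to bear through the reduction above. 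Since co-hopficity is \emph{not} in general inherited by finite-Rees-index subsemigroups (as \fullref{Example}{eg:cohopfreesdown} shows), this transfer cannot be invoked as a black box; making it precise for the particular invariant subsemigroup arising from the dynamics of $\phi$ is the delicate heart of the argument, and the step I expect to require the most care.
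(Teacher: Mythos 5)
Your reduction---find one $N$ with $T\phi^N \subseteq T$, then use co-hopficity of $T$ together with injectivity and the finiteness of $F = S-T$ to conclude that $\phi^N$, and hence $\phi$, is a bijection---is exactly the second half of the paper's proof and is correct as you state it. The genuine gap is that you never actually produce such an $N$: you explicitly defer the ``delicate heart of the argument'', and the route you sketch for it (the subsemigroups $U_m$, passing to $\phi$-invariant finite-Rees-index subsemigroups, transferring co-hopficity to them, worrying about elements of $F$ with preimages of arbitrary depth) is a detour that is not needed and gives no visible way to terminate.

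The missing idea is much simpler, and it is precisely where finite generation is spent. Fix a finite generating set $X$ of $T$. Your own orbit analysis, applied to a single $t \in X$, gives: either the sequence $t\phi, t\phi^2, \ldots$ repeats, in which case injectivity forces $t\phi^{j-i} = t$ and hence $t\phi^{\ell m_t} = t \in T$ for all $\ell$, with $m_t = j-i$ (note that in this periodic case the orbit need \emph{not} be eventually contained in $T$---it may revisit $F$ forever---so one only gets membership in $T$ along an arithmetic progression, which is why an lcm is needed below); or the orbit is infinite and injective, in which case it meets the finite set $F$ only finitely often and $t\phi^{\ell} \in T$ for all large $\ell$. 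Either way there exist $k_t, m_t$ with $t\phi^{\ell m_t} \in T$ for all $\ell \geq k_t$. Now set $k = \max\{k_t : t \in X\}$ and $m = \lcm\{m_t : t \in X\}$, both of which exist because $X$ is finite; then $X\phi^{km} \subseteq T$, and since $\phi^{km}$ is a homomorphism and every element of $T$ is a product of elements of $X$, it follows that $T\phi^{km} \subseteq T$. Taking $N = km$ feeds directly into your reduction and finishes the proof; the transfer of co-hopficity to exotic invariant subsemigroups that you anticipate needing never arises.
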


Notice that, in \fullref{Theorem}{thm:cohopfreesfgup}, $S$ is also
finitely generated.

\begin{proof}
Let $X$ be a finite generating set for $T$ and let $\phi : S \to S$ be
an injective endomorphism. Let $t \in T$. Consider the images $t\phi$,
$t\phi^2$, $\ldots$. If $t\phi^i = t\phi^j$ for $i < j$, then the
injectivity of $\phi$ forces $t\phi^{j-i} = t$ and so
$t\phi^{\ell(j-i)} \in T$ for all $\ell \in \nset$. On the other hand,
if the elements $t\phi$, $t\phi^2$, $\ldots$ are all distinct, then
since $S-T$ is finite, $t\phi^\ell \in T$ for all sufficiently large
$\ell$. In either case, there exist some $k_t,m_t \in \nset$ such that
$t\phi^{\ell m_t} \in T$ for all $\ell \geq k_t$. Let $k = \max\{k_t :
t \in X\}$ and $m = \lcm\{m_t : t \in X\}$; both $k$ and $m$ exist
because $X$ is finite. Then $X\phi^{km} \subseteq T$, and so $T\phi^{km}
\subseteq T$ since $X$ generates $T$.

Since $\phi : S \to S$ is an injective endomorphism, so is $\phi^{km} : S
\to S$. Hence $\phi^{km}|_T$ is an injective endomorphism from $T$ to
$T$. Since $T$ is co-hopfian, $\phi^{km}|_T : T \to T$ is a
bijection. Therefore $\phi^{km}|_{S-T}$ must be an injective map from
$S-T$ to $S-T$, and hence a bijection since $S-T$ is finite. Thus
$\phi^{km} : S \to S$ is a bijection, and hence so is $\phi$.

Therefore any injective endomorphism from $S$ to itself is bijective
and so an automorphism. Thus $S$ is co-hopfian.
\end{proof}

We will shortly exhibit an example showing that
\fullref{Theorem}{thm:cohopfreesfgup} does not hold without the
hypothesis of finite generation. First, we need to define a
construction that builds a semigroup from a simple graph and establish some
of its properties.

\begin{definition}
\label{def:sgrpfromsimpgraph}
Let $\Gamma$ be a simple graph. Let $V$ be the set of vertices of $\Gamma$. Let $S_\Gamma = V \cup \{e,n,0\}$. Define a multiplication on $S_\Gamma$ by
\begin{align*}
v_1v_2 &=
\begin{cases} 
e & \text{if there is an edge between $v_1$ and $v_2$ in $\Gamma$,} \\
n & \text{if there is no edge between $v_1$ and $v_2$ in $\Gamma$,}
\end{cases} 
&& \text{for $v_1,v_2 \in V$,} \displaybreak[0]\\
ve &= ev = vn = nv = 0 && \text{for $v \in V$,} \displaybreak[0]\\
en &= ne = e^2 = n^2 = 0 \\
0x &= x0 = 0 && \text{for $x \in S_\Gamma$.}
\end{align*}
Notice that all products of two elements of $S_\Gamma$ lie in
$\{e,n,0\}$ and all products of three elements are equal to $0$. Thus
this multiplication is associative and $S_\Gamma$ is a semigroup.
\end{definition}

We emphasize that \fullref{Definition}{def:sgrpfromsimpgraph} only
applies to simple graphs.

\begin{lemma}
\label{lem:cofsubgraphifffinreesssg}
Let $\Gamma$ be a graph and let $\Delta$ be an induced subgraph of
$\Gamma$. Then the vertex set of $\Delta$ is cofinite in the vertex
set of $\Gamma$ if and only if $S_\Delta$ is a finite Rees index
subsemigroup of $S_\Gamma$.
\end{lemma}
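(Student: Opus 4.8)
The plan is to prove both directions of the biconditional by directly relating the cardinality of the symmetric difference of the vertex sets to the Rees index, using the explicit structure of $S_\Gamma$ and $S_\Delta$ given in \fullref{Definition}{def:sgrpfromsimpgraph}. The key observation to establish first is how $S_\Delta$ sits inside $S_\Gamma$. Writing $V$ for the vertex set of $\Gamma$ and $W \subseteq V$ for the vertex set of $\Delta$, both semigroups share the three adjoined elements $e$, $n$, $0$, and the vertices of $\Delta$ form a subset of the vertices of $\Gamma$. Because $\Delta$ is an \emph{induced} subgraph, two vertices $w_1, w_2 \in W$ are adjacent in $\Delta$ if and only if they are adjacent in $\Gamma$; hence the product $w_1 w_2$ computed in $S_\Delta$ (yielding $e$ or $n$) agrees with the product computed in $S_\Gamma$. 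This is exactly the point where the ``induced'' hypothesis is needed, and it guarantees that $S_\Delta$ is genuinely a subsemigroup of $S_\Gamma$ rather than merely an abstract semigroup with a map into $S_\Gamma$. I would state and verify this compatibility of multiplications explicitly, since it underlies both directions.

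Once $S_\Delta \leqslant S_\Gamma$ is established, I would compute the complement. As sets, $S_\Gamma = V \cup \{e,n,0\}$ and $S_\Delta = W \cup \{e,n,0\}$ with $W \subseteq V$, so $S_\Gamma - S_\Delta = V - W$, the set of vertices of $\Gamma$ not lying in $\Delta$. By definition the Rees index of $S_\Delta$ in $S_\Gamma$ is
\[
|S_\Gamma - S_\Delta| + 1 = |V - W| + 1.
\]
This identity makes both directions immediate. If the vertex set $W$ of $\Delta$ is cofinite in $V$, then $|V - W|$ is finite, so $|V - W| + 1$ is finite and $S_\Delta$ has finite Rees index in $S_\Gamma$. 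Conversely, if $S_\Delta$ has finite Rees index, then $|V - W| + 1$ is finite, whence $|V - W|$ is finite and $W$ is cofinite in $V$.

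The argument is essentially a bookkeeping computation, so I do not anticipate a serious obstacle; the only genuine content is the verification that the multiplications agree, and the one subtlety to flag is that this verification truly requires $\Delta$ to be \emph{induced}. If $\Delta$ were merely a subgraph, two vertices of $\Delta$ could be non-adjacent in $\Delta$ but adjacent in $\Gamma$, so their product would be $n$ in $S_\Delta$ but $e$ in $S_\Gamma$, and $S_\Delta$ would fail to be a subsemigroup; the biconditional would then be vacuous or false on the left-hand side. I would therefore make the role of the induced hypothesis explicit in the write-up, and otherwise present the two implications as the short consequences of the cardinality identity described above.
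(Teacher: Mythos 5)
Your proposal is correct and follows essentially the same route as the paper, whose proof is just the observation that $S_\Gamma - S_\Delta = (V \cup \{e,n,0\}) - (W \cup \{e,n,0\}) = V - W$, from which both directions are immediate. Your additional explicit check that the induced-subgraph hypothesis makes the two multiplications agree (so that $S_\Delta$ really is a subsemigroup of $S_\Gamma$) is a point the paper leaves implicit, and is a reasonable thing to spell out.
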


\begin{proof}
Suppose $\Gamma$ has vertex set $V$ and $\Delta$ has vertex set
$W$. The result is immediate from the fact that $S_\Gamma - S_\Delta =
(V \cup \{e,n,0\}) - (W \cup \{e,n,0\}) = V- W$.
\end{proof}

The following lemma relates the co-hopficity of a graph $\Gamma$ and
the semigroup $S_\Gamma$. A homomorphism of graphs $\phi : \Gamma \to
\Gamma'$ is a mapping from the vertex set of $\Gamma$ to the vertex set of
$\Gamma'$ that preserves edges: that is, for all vertices $v_1$ and
$v_2$ of $\Gamma$, if $(v_1,v_2)$ is an edge of $\Gamma$, then
$(v_1\phi,v_2\phi)$ is an edge of $\Gamma'$. Note, however, that the
converse is not required to hold: it is possible that $(v_1,v_2)$ is
\emph{not} an edge of $\Gamma$, but $(v_1\phi,v_2\phi)$ \emph{is} an
edge of $\Gamma'$. As with other types of relational or algebraic
structure, a graph is co-hopfian if every injective endomorphism is an
automorphism.

\begin{lemma}
\label{lem:gammacohopfimpliessgammacohopf}
If the graph $\Gamma$ is co-hopfian, the semigroup $S_\Gamma$ is
co-hopfian.
\end{lemma}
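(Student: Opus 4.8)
The plan is to take an arbitrary injective endomorphism $\psi\colon S_\Gamma\to S_\Gamma$ and show it is surjective, so that it is an automorphism. The strategy is to pin down, using only the multiplication of $S_\Gamma$, which elements are vertices and which are the three distinguished elements $e,n,0$; once $\psi$ is shown to preserve this partition and to fix $e,n,0$, the restriction $\psi|_V$ will be an injective graph endomorphism of $\Gamma$, and co-hopficity of $\Gamma$ will finish the argument.

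First I would record the relevant algebraic invariants. Every element of $S_\Gamma$ cubes to $0$, and $0$ is the unique idempotent (since $e^2=n^2=0$ and, as $\Gamma$ is simple, $v^2=n\neq 0$ for $v\in V$); hence $\psi(0)$, being idempotent, equals $0$. Moreover the vertices are characterised purely algebraically as $V=\{s\in S_\Gamma:s^2\neq 0\}$. Assuming $V\neq\emptyset$ (otherwise $S_\Gamma$ is finite and co-hopficity is automatic), fix $v\in V$. Injectivity gives $\psi(n)\neq\psi(0)=0$, and since $\psi(v)^2=\psi(v^2)=\psi(n)\neq 0$ we conclude $\psi(v)\in V$; thus $\psi(V)\subseteq V$. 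Because every vertex squares to $n$, this yields $\psi(n)=\psi(v)^2=n$. Finally $\psi(e)^2=\psi(e^2)=\psi(0)=0$ forces $\psi(e)\in\{e,n,0\}$, and injectivity together with $\psi(n)=n$ and $\psi(0)=0$ rules out the last two options, so $\psi(e)=e$.

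With $e,n,0$ fixed and $\psi(V)\subseteq V$, the restriction $\psi|_V$ is injective and preserves edges: if $v_1v_2=e$ (that is, $(v_1,v_2)$ is an edge), then $\psi(v_1)\psi(v_2)=\psi(e)=e$, so $(\psi(v_1),\psi(v_2))$ is an edge again. Hence $\psi|_V$ is an injective graph endomorphism of $\Gamma$, which by hypothesis is a graph automorphism and in particular surjective onto $V$. Combined with $\psi(e)=e$, $\psi(n)=n$, and $\psi(0)=0$, this shows $\psi$ is surjective, hence bijective, hence an automorphism of $S_\Gamma$.

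I expect the crux to be the middle step: showing that injectivity forces $\psi$ to fix $e$ and $n$ individually, rather than merely permuting $\{e,n,0\}$ or absorbing a vertex into it. This is precisely where injectivity is indispensable. The only genuinely degenerate case is $V=\emptyset$, when $S_\Gamma$ is finite and co-hopficity holds automatically; in all other cases (including an edgeless $\Gamma$) the invariant-based argument above applies verbatim, since it never uses that $e$ arises as a product of two vertices. Finally, note that only edge-preservation, not edge-reflection, is required of $\psi|_V$, which matches the paper's one-directional definition of graph homomorphism.
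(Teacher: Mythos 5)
Your proof is correct and follows essentially the same route as the paper's: both arguments show that an injective endomorphism of $S_\Gamma$ must fix $e$, $n$, $0$ and map $V$ into $V$ (the paper via uniqueness of the three-element null subsemigroup $\{e,n,0\}$, you via the equivalent observations that $0$ is the unique idempotent and $V=\{s:s^2\neq 0\}$), and then restrict to an injective graph endomorphism of $\Gamma$ and invoke co-hopficity of $\Gamma$. Your explicit handling of the degenerate case $V=\emptyset$ is a minor point the paper leaves implicit, but the substance is the same.
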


\begin{proof}
Let $V$ be the vertex set of $\Gamma$ and let $X =
\{e,n,0\}$, so that $S_\Gamma = V \cup X$.

Suppose $\Gamma$ is co-hopfian; the aim is to show that $S_\Gamma$ is
co-hopfian. Let $\phi : S_\Gamma \to S_\Gamma$ be an injective
endomorphism. Since $X$ is the unique three-element null subsemigroup
of $S_\Gamma$, we have $X\phi = X$ and so $V\phi \subseteq V$ since
$\phi$ is injective. Furthermore, $0\phi = e^2\phi = (e\phi)^2 =
0$. Let $v \in V$; note that $v\phi \in V\phi \subseteq V$. Since
$\Gamma$ is simple, there are no loops at $v$ or $v\phi$, and so we
have $v^2 = n$ and $(v\phi)^2 = n$. Hence $n\phi = v^2\phi = (v\phi)^2
= n$. Therefore, since $X \phi = X$, it follows that $e\phi = e$. Let
$v_1,v_2 \in V$. Then
\begin{align*}
&\text{there is an edge between $v_1$ and $v_2$ in $\Gamma$} \\
\iff{}& v_1v_2 = e \\
\iff{}& (v_1v_2)\phi = e\phi \\
\iff{}& (v_1\phi)(v_2\phi) = e \\
\iff{}&\text{there is an edge between $v_1\phi$ and $v_2\phi$ in $\Gamma$.}
\end{align*}
Hence $\phi|_V : V \to V$ is an injective endomorphism of
$\Gamma$. Since $\Gamma$ is co-hopfian, $\phi|_V$ is a
bijection. Since $\phi|_X$ is a bijection, it follows that $\phi :
S_\Gamma \to S_\Gamma$ is a bijection. This proves that $S_\Gamma$ is
co-hopfian.
\end{proof}

\begin{lemma}
\label{lem:treesgammacohopfimpliesgammacohopf}
If $\Gamma$ is a tree and the semigroup $S_\Gamma$ is co-hopfian, the
graph $\Gamma$ is co-hopfian.
\end{lemma}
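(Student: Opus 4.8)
The plan is to prove the contrapositive: assuming $\Gamma$ is a tree that is \emph{not} co-hopfian, I would construct an injective, non-surjective endomorphism of $S_\Gamma$, thereby showing $S_\Gamma$ is not co-hopfian. So I would fix an injective graph endomorphism $\phi : \Gamma \to \Gamma$ that is not an automorphism, and manufacture the desired semigroup endomorphism $\psi$ from it by letting $\psi$ act as $\phi$ on the vertex set and fix $e$, $n$, and $0$.

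The crux of the argument — and the step I expect to be the main obstacle — is the following claim, which is where the tree hypothesis does all the work: \emph{an injective endomorphism of a tree sends non-adjacent pairs to non-adjacent pairs}. To prove it I would take distinct non-adjacent vertices $u,w$ and let $u = p_0, p_1, \dots, p_k = w$ be the unique path joining them in $\Gamma$; non-adjacency gives $k \ge 2$. Since $\phi$ preserves edges and is injective, the images $p_0\phi, \dots, p_k\phi$ are distinct and consecutively adjacent, hence form a path of length $k \ge 2$. If $u\phi$ and $w\phi$ were adjacent, this edge together with that path would close up into a cycle of length $k+1 \ge 3$, contradicting the acyclicity of $\Gamma$. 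Thus $u\phi$ and $w\phi$ are non-adjacent. (The loop-free diagonal case is trivial, so $(v\phi)^2 = n$ for every vertex $v$, matching $n\psi = n$.)

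Granting the claim, $\phi$ now preserves \emph{both} adjacency and non-adjacency. If $\phi$ were surjective on vertices it would therefore be a graph isomorphism, i.e.\ an automorphism, contrary to assumption; hence $\phi$ misses some vertex $v_0 \in V$. I would then define $\psi : S_\Gamma \to S_\Gamma$ by $v\psi = v\phi$ for $v \in V$ and $e\psi = e$, $n\psi = n$, $0\psi = 0$. Verifying that $\psi$ is a semigroup homomorphism is routine once the claim is in hand: for $v_1,v_2 \in V$ the product $v_1v_2$ is $e$ or $n$ according to adjacency, and $\phi$ respects each case; every remaining product among $V$ and $\{e,n,0\}$ collapses to $0$ and is preserved because $V\phi \subseteq V$. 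Injectivity of $\psi$ follows from injectivity of $\phi$ together with the fact that $V\phi \subseteq V$ is disjoint from the fixed, pairwise-distinct elements $e,n,0$.

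Finally, $\psi$ fails to be surjective, since the missed vertex $v_0$ lies outside $V\phi = V\psi$ and differs from $e,n,0$, so $v_0 \notin \im\psi$. Hence $\psi$ is an injective endomorphism of $S_\Gamma$ that is not a bijection, so $S_\Gamma$ is not co-hopfian, completing the contrapositive. I would emphasise that the tree hypothesis enters \emph{only} through the non-adjacency-preservation claim, and that this claim genuinely fails for general graphs — a graph homomorphism may carry a non-edge to an edge — which is precisely why this converse of \fullref{Lemma}{lem:gammacohopfimpliessgammacohopf} needs $\Gamma$ to be a tree.
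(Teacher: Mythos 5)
Your proposal is correct and is essentially the paper's own argument run in the contrapositive direction: the same extension of $\phi$ to $S_\Gamma$ fixing $e$, $n$, $0$, and the same key use of the tree hypothesis (unique path plus the cycle contradiction) to show that an injective endomorphism of a tree preserves non-adjacency. The only extra step you need — that a vertex-surjective $\phi$ would already be an automorphism, so a non-automorphism must miss a vertex — is correctly justified by the non-adjacency-preservation claim.
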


\begin{proof}
Let $V$ be the vertex set of $\Gamma$ and let $X =
\{e,n,0\}$, so that $S_\Gamma = V \cup X$.

Let $\Gamma$ be a tree and suppose that $S_\Gamma$ is co-hopfian; the
aim is to show $\Gamma$ is co-hopfian. Let $\phi : \Gamma \to \Gamma$
be an injective endomorphism. Extend $\phi$ to a map $\hat\phi :
S_\Gamma \to S_\Gamma$ by defining $n\hat\phi = n$, $e\hat\phi = e$,
and $0\hat\phi = 0$. Notice that $\hat\phi$ is injective since $\phi$
is injective. We now have to check the homomorphism condition for
$\hat\phi$ in various cases. Let $v_1,v_2 \in V$. Then either $v_1v_2
= e$ or $v_1v_2 = n$; we consider these cases separately:
\begin{itemize}
\item If $v_1v_2 = e$, then there is an edge between $v_1$ and $v_2$
  in $\Gamma$, and so, since $\phi$ is an endomorphism of $\Gamma$,
  there is an edge between $v_1\phi$ and $v_2\phi$ in $\Gamma$, and
  thus $(v_1\phi)(v_2\phi) = e$. Therefore $v_1v_2 = e$ implies
  $(v_1v_2)\hat\phi = e\hat\phi = e = (v_1\phi)(v_2\phi) =
  (v_1\hat\phi)(v_2\hat\phi)$.
\item If $v_1v_2 = n$, then there is no edge between $v_1$ and
  $v_2$. Since $\Gamma$ is a tree and thus connected, there is a path
  $\pi = (v_1 = x_1,x_2,\ldots,x_n=v_2)$ from $v_1$ to $v_2$. Since
  there is no edge between $v_1$ and $v_2$, we have $n \geq 3$. Since
  $\phi$ is an injective endomorphism of $\Gamma$, there is a path
  $\pi\phi = (v_1\phi = x_1\phi,x_2\phi,\ldots,x_n\phi=v_2\phi)$ from
  $v_1\phi$ to $v_2\phi$. In particular, injectivity means that
  $v_1\phi$ and $v_2\phi$ are not among the intermediate vertices
  $x_2\phi,\ldots,x_{n-1}\phi$. Now, if there were an edge between
  $v_1\phi$ and $v_2\phi$, then this edge and the path $\pi\phi$ would
  form a non-trivial cycle, contradicting the fact that $\Gamma$ is a
  tree. Hence there is no edge between $v_1\phi$ and
  $v_2\phi$. Therefore $v_1v_2 = n$ implies $(v_1v_2)\hat\phi =
  n\hat\phi = n = (v_1\phi)(v_2\phi) = (v_1\hat\phi)(v_2\hat\phi)$.
\end{itemize}
Since any product where at least one of the element is not from $V$ is
equal to $0$, it is easy to see that the endomorphism condition holds
in these cases. Hence $\hat\phi : S_\Gamma \to S_\Gamma$ is an
injective endomorphism. Since $S_\Gamma$ is co-hopfian, $\hat\phi$ is
a bijection, and so $\phi = \hat\phi|_V$ is a bijection. This proves
that $\Gamma$ is co-hopfian.
\end{proof}

We can now present the example showing that
\fullref{Theorem}{thm:cohopfreesfgup} no longer holds without the
hypothesis of finite generation:

\begin{example}
\label{eg:cohopfreesup}
Define a graph $\Gamma$ as follows. The vertex set is
\[
V = \{x_i, y_i: i \in \zset\} \cup \{z_j : j \in \nset\},
\]
and there are edges between $x_i$ and $y_i$ for all $i \in \zset$,
between $y_j$ and $z_j$ for all $j \in \nset$, and between $x_i$ and
$x_{i+1}$ for all $i \in \zset$. The graph $\Gamma$ is as shown in
\fullref{Figure}{fig:graphgamma}. Let $\Delta$ be the subgraph induced
by $W = V - \{y_0\}$; the graph $\Delta$ is as shown in
\fullref{Figure}{fig:graphgammadash}. Note that $\Gamma$ and $\Delta$
are trees and in particular simple.

\begin{figure}[t]
\centerline{%
\begin{tikzpicture}[node distance=5mm]
\node (x0) {$x_0$};
\node (x1) [right=of x0] {$x_1$};
\node (x2) [right=of x1] {$x_2$};
\node (x3) [right=of x2] {$x_3$};
\node (x4) [right=of x3] {};
\node (xm1) [left=of x0] {$x_{-1}$};
\node (xm2) [left=of xm1] {$x_{-2}$};
\node (xm3) [left=of xm2] {$x_{-3}$};
\node (xm4) [left=of xm3] {};
\node (y0) [above=of x0] {$y_0$};
\node (y1) [above=of x1] {$y_1$};
\node (y2) [above=of x2] {$y_2$};
\node (y3) [above=of x3] {$y_3$};
\node (ym1) [above=of xm1] {$y_{-1}$};
\node (ym2) [above=of xm2] {$y_{-2}$};
\node (ym3) [above=of xm3] {$y_{-3}$};
\node (z1) [above=of y1] {$z_1$};
\node (z2) [above=of y2] {$z_2$};
\node (z3) [above=of y3] {$z_3$};
\draw[-] (x0) edge (x1);
\draw[-] (x1) edge (x2);
\draw[-] (x2) edge (x3);
\draw[-] (x3) edge[dashed] (x4);
\draw[-] (x0) edge (xm1);
\draw[-] (xm1) edge (xm2);
\draw[-] (xm2) edge (xm3);
\draw[-] (xm3) edge[dashed] (xm4);
\draw[-] (x0) edge (y0);
\draw[-] (x1) edge (y1);
\draw[-] (x2) edge (y2);
\draw[-] (x3) edge (y3);
\draw[-] (xm1) edge (ym1);
\draw[-] (xm2) edge (ym2);
\draw[-] (xm3) edge (ym3);
\draw[-] (y1) edge (z1);
\draw[-] (y2) edge (z2);
\draw[-] (y3) edge (z3);
\end{tikzpicture}}
\caption{The graph $\Gamma$ from \fullref{Example}{eg:cohopfreesup}.}
\label{fig:graphgamma}
\end{figure}

\begin{figure}[t]
\centerline{%
\begin{tikzpicture}[node distance=5mm]
\node (x0) {$x_0$};
\node (x1) [right=of x0] {$x_1$};
\node (x2) [right=of x1] {$x_2$};
\node (x3) [right=of x2] {$x_3$};
\node (x4) [right=of x3] {};
\node (xm1) [left=of x0] {$x_{-1}$};
\node (xm2) [left=of xm1] {$x_{-2}$};
\node (xm3) [left=of xm2] {$x_{-3}$};
\node (xm4) [left=of xm3] {};
\node (y1) [above=of x1] {$y_1$};
\node (y2) [above=of x2] {$y_2$};
\node (y3) [above=of x3] {$y_3$};
\node (ym1) [above=of xm1] {$y_{-1}$};
\node (ym2) [above=of xm2] {$y_{-2}$};
\node (ym3) [above=of xm3] {$y_{-3}$};
\node (z1) [above=of y1] {$z_1$};
\node (z2) [above=of y2] {$z_2$};
\node (z3) [above=of y3] {$z_3$};
\draw[-] (x0) edge (x1);
\draw[-] (x1) edge (x2);
\draw[-] (x2) edge (x3);
\draw[-] (x3) edge[dashed] (x4);
\draw[-] (x0) edge (xm1);
\draw[-] (xm1) edge (xm2);
\draw[-] (xm2) edge (xm3);
\draw[-] (xm3) edge[dashed] (xm4);
\draw[-] (x1) edge (y1);
\draw[-] (x2) edge (y2);
\draw[-] (x3) edge (y3);
\draw[-] (xm1) edge (ym1);
\draw[-] (xm2) edge (ym2);
\draw[-] (xm3) edge (ym3);
\draw[-] (y1) edge (z1);
\draw[-] (y2) edge (z2);
\draw[-] (y3) edge (z3);
\end{tikzpicture}}
\caption{The cofinite subgraph $\Delta$ of the graph $\Gamma$ from
  \fullref{Example}{eg:cohopfreesup}.}
\label{fig:graphgammadash}
\end{figure}

Define a map
\begin{align*}
\phi {}&: V \to V; & x_i & \mapsto x_{i+1} &&\text{for all $i \in \zset$,}\\
&& y_i & \mapsto y_{i+1} &&\text{for all $i \in \zset$,}\\
&& z_i & \mapsto z_{i+1} &&\text{for all $i \in \nset$.}
\end{align*}
It is easy to see that $\phi$ is an injective endomorphism of
$\Gamma$. However, $\phi$ is not a bijection since $z_1 \notin
\im\phi$. Thus the graph $\Gamma$ is not co-hopfian.

Suppose $\psi : W \to W$ is an injective endomorphism of
$\Delta$. Clearly $\psi$ must preserve adjacency in $\Delta$. So the
bi-infinite path through the vertices $x_i$ must be mapped into
itself. The preservation of adjacency requires that this path is
mapped \emph{onto} itself. All vertices on this path have degree $3$
except $x_0$. Hence $x_0\psi = x_0$. The preservation of adjacency
requires that either $x_{i}\psi = x_{-i}$ or $x_{i}\psi = x_i$ for all
$i \in \zset$. The former case is impossible since it would force
$y_i\psi = y_{-i}$ for all $i \in \zset$, but $y_1$ has degree
$2$ and $y_{-1}$ has degree $1$. Hence the latter case holds, which
forces $y_i\psi = y_i$ for all $i \in \zset$, and then $z_j\psi = z_j$
for all $j \in \nset$. Hence $\psi$ is the identity map and so
bijective. Thus the subgraph $\Delta$ is co-hopfian.

By \fullref{Lemma}{lem:cofsubgraphifffinreesssg}, $S_\Delta$ is a
finite Rees index subsemigroup of $S_\Gamma$. By
\fullref{Lemma}{lem:gammacohopfimpliessgammacohopf}, $S_\Delta$ is
co-hopfian. By
\fullref{Lemma}{lem:treesgammacohopfimpliesgammacohopf}, $S_\Gamma$ is
not co-hopfian.
\end{example}

\section{Open questions}

For semigroups, the main open problem in this area seem to be whether
\fullref{Theorem}{thm:cohopfreesfgup} generalizes to finite Green
index extensions:

\begin{question}
\label{qu:cohopfgreenfgup}
Let $S$ be a semigroup and $T$ a subsemigroup of finite Green
index. Suppose $T$ is finitely generated (or even finitely presented),
so that $S$ is finitely generated
\cite[Theorem~4.3]{cgr_greenindex}. If $T$ is co-hopfian, must $S$ be
co-hopfian?
\end{question}

Notice that because finite Green index generalizes finite group index,
this question subsumes the corresponding question for group
extensions.

Since relative finiteness and finite presentability are not preserved
on passing to finite Green index extensions unless the relative
Sch\"{u}tzenberger groups of the relative $\gH$-classes in the
complement have the relevant property (see
\cite[Theorem~20]{gray_green1} and
\cite[Example~6.5]{cgr_greenindex}), it is natural to ask the
following question:

\begin{question}
Let $S$ be a semigroup and $T$ a subsemigroup of finite Green
index. Suppose $T$ is finitely generated, so that $S$ is finitely
generated \cite[Theorem~4.3]{cgr_greenindex} and the the $T$-relative
Sch\"{u}tzenberger groups of the $\gH^T$-classes in $S-T$ are finitely
generated \cite[Theorem~5.1]{cgr_greenindex}. If $T$ is hopfian, and
the $T$-relative Sch\"{u}tzenberger groups of the $\gH^T$-classes in
$S-T$ are hopfian, must $S$ be hopfian?

If the answer to \fullref{Question}{qu:cohopfgreenfgup} is `no', then
the question from the previous paragraph should be asked for
co-hopfic\-ity: if $T$ is co-hopfian, and the $T$-relative
Sch\"{u}tzenberger groups of the $\gH^T$-classes in $S-T$ are
co-hopfian, must $S$ be co-hopfian?
\end{question}

For groups, the following question still seems to be open:

\begin{question}
\label{qu:hopfgroupup}
Is hopficity for groups preserved under passing to finite index
extensions? (That is, does Hirshon's result
\cite[Corollary~2]{hirshon_hopficity} hold without the hypothesis of
finite generation?)
\end{question}

Finally, none of the relevant questions on co-hopficity for groups
have been studied:

\begin{question}
\label{qu:cohopfgroup}
Is co-hopficity for groups preserved under passing to finite index
subgroups and extensions? What about within the classes of finitely
generated or finitely presented groups?
\end{question}

\section*{Acknowledgements}

The first author's research was funded by the European Regional
Development Fund through the programme {\sc COMPETE} and by the
Portuguese Government through the {\sc FCT} (Funda\c{c}\~{a}o para a
Ci\^{e}ncia e a Tecnologia) under the project {\sc PEst-C}/{\sc
  MAT}/{\sc UI0}144/2011 and through an {\sc FCT} Ci\^{e}ncia 2008
fellowship. Some of the work described in this paper was carried out
while the first author was visiting Sultan Qaboos University and the
authors thank the University for its support.

The authors thank the anonymous referee for pointing out an error in
one proof and for valuable comments and suggestions.


\end{document}